\author{Carlo Sanna}
\address{Dipartimento di Matematica\\Universit\`a di Torino\\via Carlo Alberto 10\\10123 Torino, Italy}
\email{carlo.sanna.dev@gmail.com}
\urladdr{\url{http://orcid.org/0000-0002-2111-7596}}
\author{Emanuele Tron}
\address{Institut de Math{\'e}matiques de Bordeaux\\Universit{\'e} de Bordeaux\\351 Cours de la Lib{\'e}ration\\33405 Talence, France}
\email{emanuele.tron@u-bordeaux.fr}
\subjclass[2010]{Primary: 11B37. Secondary: 11B39, 11B05, 11N25}
\keywords{asymptotic density; Fibonacci numbers; greatest common divisor}
\title[Prescribed G.C.D. of $n$ and the $n$th Fibonacci number]
{The density of numbers $n$ having a prescribed G.C.D. with the $n$th Fibonacci number}
\theoremstyle{plain}
\newtheorem{thm}{Theorem}[section]
\newtheorem{lem}[thm]{Lemma}
\theoremstyle{remark}
\newtheorem{rmk}[thm]{Remark}
\def\lcm{\operatorname{lcm}}
\begin{document}

\maketitle

\begin{abstract}
For each positive integer $k$, let $\mathscr{A}_k$ be the set of all positive integers $n$ such that $\gcd(n, F_n) = k$, where $F_n$ denotes the $n$th Fibonacci number.

We prove that the asymptotic density of $\mathscr{A}_k$ exists and is equal to
\begin{equation*}
\sum_{d = 1}^\infty \frac{\mu(d)}{\lcm(dk, z(dk))}
\end{equation*}
where $\mu$ is the M\"obius function and $z(m)$ denotes the least positive integer $n$ such that $m$ divides $F_n$. 
We also give an effective criterion to establish when the asymptotic density of $\mathscr{A}_k$ is zero and we show that this is the case if and only if $\mathscr{A}_k$ is empty.
\end{abstract}

\section{Introduction}

Let $(u_n)_{n \geq 1}$ be a nondegenerate linear recurrence with integral values.
The arithmetic relations between $u_n$ and $n$ are a topic which has attracted the attention of several researchers, especially in recent years. 
For instance, the set of positive integers $n$ such that $u_n$ is divisible by $n$ has been studied by Alba~Gonz\'alez, Luca, Pomerance, and Shparlinski~\cite{MR2928495}, under the mild hypothesis that the characteristic polynomial of $(u_n)_{n \geq 1}$ has only simple roots; and by Andr\'e-Jeannin~\cite{MR1131414}, Luca and Tron~\cite{MR3409327}, Somer~\cite{MR1271392}, and Sanna~\cite{MR3606950}, when $(u_n)_{n \geq 1}$ is a Lucas sequence.
A problem in a sense dual to this is that of understanding when $n$ is coprime to $u_n$. 
In this respect, Sanna~\cite[Theorem 1.1]{San2_preprint} recently proved the following result.
\begin{thm}
The set of positive integers $n$ such that $\gcd(n,u_n)=1$ has a positive asymptotic density, unless $(u_n / n)_{n \geq 1}$ is a linear recurrence.
\end{thm}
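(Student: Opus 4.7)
The natural first step is Möbius inversion: using $[\gcd(n, u_n) = 1] = \sum_{d \mid \gcd(n, u_n)} \mu(d)$ and summing over $n \le x$,
\[
N(x) := \#\{n \le x : \gcd(n, u_n) = 1\} = \sum_{d \ge 1} \mu(d)\, A_d(x), \qquad A_d(x) := \#\{n \le x : d \mid n,\ d \mid u_n\}.
\]
Since $(u_n \bmod d)$ is eventually periodic for every $d \ge 1$, the set $\{n : d \mid n,\ d \mid u_n\}$ is a finite union of arithmetic progressions modulo $\lcm(d, \tau_u(d))$, where $\tau_u(d)$ is the period, and hence has an asymptotic density $\alpha_d$. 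The first structural task is to show that $\alpha_d$ decays fast enough in $d$ to justify exchanging the sum with the limit, yielding $\lim_{x \to \infty} N(x)/x = \sum_{d \ge 1} \mu(d)\,\alpha_d =: \delta$. A bound of the shape $\alpha_d \le 1/\lcm(d, z_u(d))$, where $z_u(d)$ is the rank of apparition of $d$ in $(u_n)$ (exactly the $z(d)$ of the paper in the Lucas case), together with standard average-order estimates for $z_u$, should give absolute convergence of the series, and a routine truncation argument handles the tail uniformly in $x$.

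The second task is positivity. By the Chinese remainder theorem applied jointly to the congruences on $n$ and on $u_n$, one verifies that $\alpha_d$ is multiplicative on squarefree inputs, whence
\[
\delta = \prod_p \bigl(1 - \alpha_p\bigr)
\]
with each $\alpha_p \in [0, 1/p]$. Every local factor is strictly positive, so $\delta = 0$ exactly when the product diverges, i.e.\ when $\sum_p \alpha_p = \infty$. Since $\alpha_p \le 1/p$, this saturation must occur at the pointwise level: one is forced into the regime $\alpha_p = 1/p$ (equivalently, $p \mid u_{pm}$ for every $m \ge 1$) for a robust set of primes $p$.

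The last step is where the main obstacle lies: converting this arithmetic saturation into the structural statement that $(u_n / n)_{n \ge 1}$ satisfies a linear recurrence. The easy direction is direct: if $u_n / n$ is a linear recurrence, then it is integer-valued, so $n \mid u_n$ for all $n$, whence $\gcd(n, u_n) \ge n \ge 2$ for $n \ge 2$ and $\delta = 0$. For the converse, I would pass to the companion matrix $M$ of the recurrence and analyse $M^n \pmod p$: the saturation $p \mid u_{pm}$ for all $m$ imposes algebraic constraints on the eigenvalues of $M$, and nondegeneracy (absence of nontrivial ratios of roots that are roots of unity) rules out Skolem--Mahler--Lech-type pathologies, letting one deduce $n \mid u_n$ globally and produce a Binet-style closed form $u_n = n v_n$ with $(v_n)$ again a linear recurrence. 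This marriage of $p$-adic valuation estimates for linear recurrences with algebraic manipulation of characteristic roots is the technically demanding heart of the argument, and I expect it, rather than the sieve machinery of the first two paragraphs, to occupy most of the work.
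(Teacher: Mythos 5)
First, a point of order: the present paper does not actually prove this statement --- it quotes it as Theorem~1.1 of \cite{San2_preprint} --- so your proposal can only be checked against the machinery developed here, and that machinery already refutes your central step. The fatal gap is the Euler product. The densities $\alpha_d$ of $\{n : d \mid n,\ d \mid u_n\}$ are \emph{not} multiplicative on squarefree $d$: the underlying moduli are of the form $\lcm(d, z_u(d))$ (or the $\lcm$ of $d$ with the period modulo $d$), and for coprime $d$ and $e$ these moduli are in general far from coprime, so the Chinese remainder theorem does not factor the density --- the events ``$p \mid \gcd(n,u_n)$'' for distinct primes are strongly correlated through the ranks. Concretely, for the Fibonacci numbers one has $\alpha_d = 1/\ell(d)$, whence $\alpha_2 = 1/\lcm(2,3) = 1/6$ and $\alpha_3 = 1/\lcm(3,4) = 1/12$, but $\alpha_6 = 1/\lcm(6,12) = 1/12 \neq \alpha_2\alpha_3$. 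This is exactly why Theorem~\ref{thm:main} expresses $\mathbf{d}(\mathscr{A}_1)$ as $\sum_{d = 1}^\infty \mu(d)/\ell(d)$ and not as $\prod_p \bigl(1 - 1/\ell(p)\bigr)$. With the product gone, your criterion ``$\delta = 0$ if and only if $\sum_p \alpha_p = \infty$'' has no support; and even granting a product, the inference that divergence forces the pointwise saturation $\alpha_p = 1/p$ on a robust set of primes is a non sequitur, since for instance $\alpha_p = 1/(2p)$ for all $p$ gives a divergent sum without a single saturated prime. So the entire mechanism by which you pass from vanishing density to the arithmetic condition $p \mid u_{pm}$ for all $m$ collapses.

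There are further gaps even away from that step. The bound $\alpha_d \leq 1/\lcm(d, z_u(d))$ is Lucas-specific: for a general nondegenerate recurrence --- the actual hypothesis of the theorem --- the rank $z_u(d)$ need not exist, and the zero set of $(u_n \bmod d)$ is a union of several residue classes modulo the period rather than the multiples of a single integer, so your truncation/tail argument has nothing to run on. Even in the Fibonacci case, $\sum_d 1/\ell(d) < +\infty$ is not a ``standard average-order estimate'': it is Lemma~\ref{lem:convergence} of this paper, whose proof needs the nontrivial count $\#\mathscr{Q}_\gamma(x) \ll x^{2\gamma}$ of primes with small rank (Lemma~\ref{lem:Qgamma}). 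Finally, the structural endgame is only gestured at: the known bridge between the density statement and the conclusion is the equivalence that $(u_n/n)_{n \geq 1}$ is a linear recurrence if and only if $n \mid u_n$ for every $n$, whose hard direction rests on the Hadamard quotient theorem of Pourchet and van der Poorten; nothing in your companion-matrix and eigenvalue outline supplies a substitute for that deep input. Even your ``easy'' direction tacitly assumes that a rational-valued linear recurrence $(u_n/n)$ must be integer-valued, which itself requires a bounded-denominator argument rather than being automatic.
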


In this paper, we focus on the linear recurrence of Fibonacci numbers $(F_n)_{n \geq 1}$, defined as usual by $F_1 = F_2 = 1$ and $F_{n + 2} = F_{n+1} + F_n$ for all integers $n \geq 1$.
For each positive integer $k$, define the set
\begin{equation*}
\mathscr{A}_k := \{n \geq 1 : \gcd(n, F_n) = k\} .
\end{equation*}
Leonetti and Sanna~\cite[Theorems 1.1 and 1.3]{LS_preprint} proved the following:

\begin{thm}
If $\mathscr{B} := \{k \geq 1 : \mathscr{A}_k \neq \varnothing\}$ then its counting function satisfies
\begin{equation*}
\#\mathscr{B}(x) \gg \frac{x}{\log x} ,
\end{equation*}
for all $x \geq 2$.
Furthermore, $\mathscr{B}$ has zero asymptotic density.
\end{thm}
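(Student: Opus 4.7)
The plan is to reduce both parts of the statement to the same arithmetic question via the characterization
\[
k\in\mathscr{B}\iff\gcd\bigl(\ell(k),F_{\ell(k)}\bigr)=k,\qquad \ell(k):=\lcm(k,z(k)).
\]
The nontrivial direction: if $\gcd(n,F_n)=k$ for some $n$, then $k\mid n$ and (since $k\mid F_n$) $z(k)\mid n$, so $\ell(k)\mid n$ and therefore $F_{\ell(k)}\mid F_n$; thus $\gcd(\ell(k),F_{\ell(k)})\mid\gcd(n,F_n)=k$, while $k\mid\gcd(\ell(k),F_{\ell(k)})$ is automatic. Consequently, membership in $\mathscr{B}$ is an intrinsic property of the single integer $\ell(k)$, and we never need to hunt for an $n$.

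For the lower bound $\#\mathscr{B}(x)\gg x/\log x$, my plan is to exhibit many primes in $\mathscr{B}$. If $p$ is a prime that is not Wall--Sun--Sun then $\gcd(p,z(p))=1$, so $\ell(p)=p\cdot z(p)$ and $v_p(F_{\ell(p)})=1$; the characterization then reduces to: $p\in\mathscr{B}$ iff for every prime $q\mid z(p)$ with $q\ne p$ one has $z(q)\nmid p\cdot z(p)$. I would isolate a positive-proportion family of primes satisfying this — for instance primes $p$ whose rank $z(p)$ is a prime power $q^{a}$ with $z(q)$ coprime to $pz(p)$ (the case $q=2$, where $z(2)=3$, already forces only a congruence condition on $p$). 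Identifying such primes through a Chebotarev-type count in splitting fields for Fibonacci congruences and invoking the prime number theorem should yield the asymptotic bound.

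For the zero density of $\mathscr{B}$, the plan is to argue that the equality $\gcd(\ell(k),F_{\ell(k)})=k$ fails for almost every $k$. Any prime $q$ with $q\mid z(k)$, $q\nmid k$, and $z(q)\mid\ell(k)$ contributes to the gcd and breaks the equality. Starting with $q=2$: the combined condition ``$2\mid z(k)$, $2\nmid k$, $3\mid\ell(k)$'' is essentially a congruence on $k$ modulo small Fibonacci periods and therefore cuts out a set of positive lower density, all of which is disjoint from $\mathscr{B}$. Iterating this over finitely many small primes $q$ and using elementary inclusion--exclusion on the resulting sets, I expect the complement of $\mathscr{B}$ to have density $1$, which gives $\#\mathscr{B}(x)=o(x)$.

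The main obstacle, present in both directions, is the same: quantitative control over the rank of apparition $z$. What is known about $z$ is mostly pointwise, whereas both parts of the argument require simultaneous statements across many values. For the lower bound this means extracting $\gg x/\log x$ primes with prescribed shape of $z(p)$, which requires an effective Chebotarev or Lang--Trotter type input; for the zero density it means controlling the joint distribution of $k$ and $z(k)$ well enough to sum the densities of the auxiliary sets without catastrophic overlap. Both appear within reach of standard tools, but these are the technically demanding steps, while the characterization itself is the conceptual kernel of the proof.
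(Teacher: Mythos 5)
First, a point of comparison: the paper you are being checked against does not prove this theorem at all — it is quoted as background from Leonetti and Sanna \cite{LS_preprint}, and the only ingredient of your argument that appears in the present paper is your opening characterization $k \in \mathscr{B} \Leftrightarrow k = \gcd\left(\ell(k), F_{\ell(k)}\right)$, which is precisely Lemma~\ref{lem:Aknonempty} (itself cited from \cite{LS_preprint}). Your derivation of that characterization is correct, up to one harmless slip: $\gcd(p, z(p)) = 1$ holds for \emph{every} prime $p \neq 5$, simply because $z(p) \mid p - \left(\frac{p}{5}\right)$; the Wall--Sun--Sun condition is irrelevant both there and for the $p$-adic valuation, since $\nu_p\bigl(\gcd(\ell(p), F_{\ell(p)})\bigr) = \min\bigl(\nu_p(p\,z(p)), \nu_p(F_{\ell(p)})\bigr) = 1$ regardless. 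Your criterion ``$p \in \mathscr{B}$ iff $z(q) \nmid p\,z(p)$ for every prime $q \mid z(p)$'' is also right. The zero-density half of your plan is sound in outline but needs repairs as written: ``$2 \mid z(k)$'' is \emph{not} a congruence condition on $k$ (it depends on the prime factorization of $k$, not on $k$ modulo anything), though it is implied by congruences such as $11 \mid k$; the correct engine is the sets-of-multiples phenomenon — for each fixed $q$ the primes $r$ with $q \mid z(r)$ have positive density (a Chebotarev-type fact, due in this setting to Lagarias and to Cubre--Rouse), so $\sum 1/r$ diverges over them and almost every $k$ has such a prime factor — together with a limiting argument over growing families of $q$, since finitely many $q$, as you propose, can only exclude a set of density strictly less than $1$ (indeed your conditions always spare the $k$ divisible by all the auxiliary primes used).

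The genuine gap is in the lower bound, and it is fatal as proposed: the family ``primes $p$ with $z(p)$ a prime power $q^a$'' has relative density \emph{zero} among the primes, so it cannot produce $\gg x/\log x$ elements even granting all the Chebotarev input you ask for. Indeed, for each odd prime $r$ the set $\{p : r \mid z(p)\}$ has density $r/(r^2-1) \asymp 1/r$ (unconditional, one $r$ at a time, via Chebotarev in the relevant Kummer extensions), and since $\sum_r 1/r$ diverges, the density of primes whose $z(p)$ has at most one odd prime factor below $w$ tends to $0$ as $w \to \infty$. Worse, \emph{any} route that certifies $p \in \mathscr{B}$ must constrain every prime factor of $z(p)$ — infinitely many splitting conditions simultaneously — which is Artin-primitive-root territory and not within reach of unconditional effective Chebotarev, contrary to your closing assessment. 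The repair is to notice that you never need $p \in \mathscr{B}$: every value $\gcd(n, F_n)$ lies in $\mathscr{B}$ \emph{by definition}, so $k_p := \gcd\left(\ell(p), F_{\ell(p)}\right)$ is automatically in $\mathscr{B}$ and is divisible by $p$, hence the map $p \mapsto k_p$ is injective; one is left with the far more tractable task of showing that the cofactor $k_p/p$ is bounded for a positive proportion of primes $p$, i.e., of bounding, for most $p$, the contribution of primes $q \mid z(p)$ with $z(q) \mid p\,z(p)$ — a counting problem about primes of small rank that yields to elementary estimates in the spirit of Lemma~\ref{lem:Qgamma}, with no Chebotarev or Lang--Trotter input at all.
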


Let $z(m)$ be the \emph{rank of appearance}, or \emph{entry point}, of a positive integer $m$ in the sequence of Fibonacci numbers, that is, the smallest positive integer $n$ such that $m$ divides $F_n$.
It is well known that $z(m)$ exists.
Set also $\ell(m) := \lcm(m, z(m))$.

Our first result establishes the existence of the asymptotic density of $\mathscr{A}_k$ and provides an effective criterion to check whether this asymptotic density is positive.

\begin{thm}\label{thm:exists}
For each positive integer $k$, the asymptotic density of $\mathscr{A}_k$ exists.
Moreover, $\mathbf{d}(\mathscr{A}_k) > 0$ if and only if $\mathscr{A}_k \neq \varnothing$ if and only if $k = \gcd(\ell(k), F_{\ell(k)})$.
\end{thm}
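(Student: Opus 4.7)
The plan is to apply M\"obius inversion. The key observation is that $kd \mid \gcd(n,F_n)$ if and only if $kd \mid n$ and $z(kd) \mid n$, i.e., $\ell(kd) \mid n$. M\"obius inversion then yields the pointwise identity
\begin{equation*}
[\gcd(n,F_n)=k] \;=\; \sum_{d\geq 1}\mu(d)\,[\ell(kd)\mid n].
\end{equation*}
Summing over $n\leq x$ and interchanging formally gives $\#(\mathscr{A}_k \cap [1,x]) = \sum_d \mu(d)\lfloor x/\ell(kd)\rfloor$, so we expect $\mathbf{d}(\mathscr{A}_k) = \sum_d \mu(d)/\ell(kd)$, matching the formula announced in the abstract.

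\emph{The main technical obstacle} is that the trivial bound $\ell(m) \geq m$ only gives $\sum_d 1/\ell(kd) \geq (1/k)\sum_d 1/d = \infty$, so absolute convergence fails and the interchange must be justified by hand. Here one exploits that $\ell(kd) = \lcm(kd,z(kd))$ is typically much larger than $kd$: using that $z(p) \mid p \pm 1$ for odd primes $p$ combined with sieve-type estimates on the distribution of $z$, one shows $\ell(kd) \geq (kd)^{1+\delta}$ outside a thin set of $d$. I would split the tail $\sum_{d>D}\mu(d)\lfloor x/\ell(kd)\rfloor$ into dyadic blocks and bound each block by treating typical and exceptional $d$ separately, producing an error that is $o(x)$ uniformly as $D\to\infty$.

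For the chain of equivalences, $\mathbf{d}(\mathscr{A}_k)>0 \Rightarrow \mathscr{A}_k\neq\varnothing$ is trivial. If $n\in\mathscr{A}_k$, then $k\mid n$ and $k\mid F_n$ force $\ell(k)\mid n$, so $F_{\ell(k)}\mid F_n$ and $\gcd(\ell(k),F_{\ell(k)}) \mid \gcd(n,F_n) = k$; the reverse divisibility follows from $k\mid\ell(k)$ and $k\mid F_{z(k)}\mid F_{\ell(k)}$, giving $k=\gcd(\ell(k),F_{\ell(k)})$. Conversely, this equality is precisely the statement $\ell(k)\in\mathscr{A}_k$, so $\mathscr{A}_k$ is nonempty.

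The most delicate implication is $\mathscr{A}_k\neq\varnothing \Rightarrow \mathbf{d}(\mathscr{A}_k)>0$. I would exhibit a positive-density subset of $\mathscr{A}_k$ as follows: assuming $n_0 := \ell(k) \in \mathscr{A}_k$, consider $n = n_0 m$ with $m$ satisfying congruence conditions modulo a finite modulus $M=M(k)$, built from the primes dividing $n_0 F_{n_0}$ together with those primes $p$ whose rank $z(p)$ could force $p$ into $\gcd(n,F_n)$. Using the standard formula $v_p(F_{ab}) = v_p(F_a) + v_p(b)$ for odd primes $p$ with $p \mid F_a$ and $p \nmid b$, one verifies prime by prime that $\gcd(n_0 m, F_{n_0 m}) = k$ holds for $m$ in a union of residue classes mod $M$, which has positive density. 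The chief subtleties are controlling the prime $2$ (where the valuation formula takes a different form) and ruling out the appearance of ``new'' primes $p \mid m$ whose rank $z(p)$ happens to divide $n_0 m$; both are handled by enlarging $M$ suitably but finitely.
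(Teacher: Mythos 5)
Your M\"obius set-up is sound: the identity $[\gcd(n,F_n)=k]=\sum_{d\geq 1}\mu(d)\,[\ell(dk)\mid n]$ is correct (it is Lemma~\ref{lem:basic}(v) combined with $\sum_{d\mid m}\mu(d)=[m=1]$, and for fixed $n$ the sum is finite), and summing it over $n\leq x$ is in substance how the paper proves the density formula of Theorem~\ref{thm:main}; your derivation of the criterion $k=\gcd(\ell(k),F_{\ell(k)})$ is also correct. But the analytic crux is misdiagnosed and left unproved. Absolute convergence of $\sum_d \mu(d)/\ell(dk)$ does not ``fail'' --- it holds, and \emph{proving} it is exactly the paper's key Lemma~\ref{lem:convergence} ($\sum_n 1/\ell(n)<+\infty$); note your inequality is reversed, since $\ell(m)\geq m$ gives the inconclusive upper bound $1/\ell(kd)\leq 1/(kd)$, not a divergent lower bound. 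Your sketch omits the required input: $z(p)\mid p\pm 1$ only bounds $z(p)$ from \emph{above}, whereas one needs that $z(p)$ is rarely small, quantitatively $\#\{p\leq x: z(p)\leq p^{\gamma}\}\ll x^{2\gamma}$ (from $\prod_{p} p \mid \prod_{n\leq x^{\gamma}}F_n\leq 2^{O(x^{2\gamma})}$), together with a mechanism for passing from primes to general moduli $d$ --- the paper factors $\lcm(n,z(P(n)))=\ell(P(n))m$, counts multiplicities by $\tau(z(P(n)))$, and applies Mertens' formula. Moreover ``outside a thin set'' must mean a set with summable reciprocals for your dyadic splitting to close, and that is precisely what is not established.

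The genuinely broken step is positivity. After scaling $n=\ell(k)m$, membership in $\mathscr{A}_k$ is \emph{not} a union of residue classes modulo any finite $M$: one must avoid the infinitely many divisibility conditions $\ell(kq)/\ell(k)\mid m$, one for each prime $q\nmid k$, i.e.\ $m$ must lie in the set of nonmultiples $\mathscr{N}(\mathscr{L}_k)$ of Lemma~\ref{lem:AandL}. No finite enlargement of $M$ can handle this: choose a prime $r$ so large that $r$ and every prime $q$ with $z(q)=r$ exceed $M$, $k$, and $\ell(k)$ (possible since $z(q)\leq q+1$), and let $q$ be a primitive prime divisor of $F_r$, so $z(q)=r$ and $\ell(kq)/\ell(k)=qr$ is coprime to $M$; then \emph{every} residue class modulo $M$ contains a positive density of $m$ with $qr\mid m$, and for each such $m$ one has $q\mid \gcd(\ell(k)m, F_{\ell(k)m})$ with $q\nmid k$, so $\ell(k)m\notin\mathscr{A}_k$. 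Hence your congruence-class set is never contained in $\mathscr{A}_k$, whatever $M$. The repair is the machinery you bypassed: since $\sum_{s\in\mathscr{L}_k}1/s\ll_k \sum_q 1/\ell(q)<+\infty$ (again the convergence lemma) and $1\notin\mathscr{L}_k$, Erd\H{o}s' theorem yields the existence of $\mathbf{d}(\mathscr{N}(\mathscr{L}_k))$, and the Heilbronn--Rohrbach bound $\mathbf{d}(\mathscr{N}(\mathscr{S}))\geq\prod_{s\in\mathscr{S}}(1-1/s)$ yields positivity, the product being positive because the reciprocal sum converges and every $s\geq 2$. Note finally that the density formula cannot substitute for this argument: as the paper remarks, the series evaluates to $0$ whenever $\mathscr{A}_k=\varnothing$, and its positivity when $\mathscr{A}_k\neq\varnothing$ is not visible from the series itself.
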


Our second result is an explicit formula for the asymptotic density of $\mathscr{A}_k$.

\begin{thm}\label{thm:main}
For each positive integer $k$, we have
\begin{equation}\label{equ:main}
\mathbf{d}(\mathscr{A}_k) = \sum_{d = 1}^\infty \frac{\mu(d)}{\ell(dk)} ,
\end{equation}
where $\mu$ is the M\"obius function.
\end{thm}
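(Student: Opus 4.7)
The plan is to expand the condition $\gcd(n, F_n) = k$ via Möbius inversion, reorganise the resulting double sum, and pass to the limit using Theorem \ref{thm:exists}. The Möbius identity $\sum_{d \mid h}\mu(d) = \mathbf{1}[h = 1]$ applied to $h = \gcd(n,F_n)/k$, together with the fact that both sides vanish when $k \nmid \gcd(n,F_n)$, yields the unconditional expansion
\begin{equation*}
\mathbf{1}[\gcd(n,F_n) = k] \;=\; \sum_{d \geq 1} \mu(d)\, \mathbf{1}[dk \mid \gcd(n,F_n)].
\end{equation*}
Since $m \mid F_n$ if and only if $z(m) \mid n$, the divisibility $dk \mid \gcd(n,F_n)$ is equivalent to $\ell(dk) \mid n$. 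Summing over $n \leq x$ then gives
\begin{equation*}
\#\mathscr{A}_k(x) \;=\; \sum_{d \geq 1} \mu(d) \left\lfloor \frac{x}{\ell(dk)} \right\rfloor,
\end{equation*}
which is really a finite sum, because $\ell(dk) \geq dk$ forces the summand to vanish as soon as $dk > x$.

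For any cut-off $D \geq 1$, writing $\lfloor x/\ell(dk)\rfloor = x/\ell(dk) + O(1)$ for $d \leq D$ splits the above as
\begin{equation*}
\frac{\#\mathscr{A}_k(x)}{x} \;=\; \sum_{d \leq D} \frac{\mu(d)}{\ell(dk)} + O\!\left(\frac{D}{x}\right) + T_D(x), \qquad T_D(x) := \frac{1}{x}\sum_{D < d \leq x/k} \mu(d) \left\lfloor \frac{x}{\ell(dk)} \right\rfloor.
\end{equation*}
Letting $x \to \infty$ with $D$ fixed, the left-hand side converges to $\mathbf{d}(\mathscr{A}_k)$ by Theorem \ref{thm:exists}, the $O(D/x)$ term vanishes, and so $T_D(x)$ must converge to $\mathbf{d}(\mathscr{A}_k) - \sum_{d \leq D}\mu(d)/\ell(dk)$. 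The formula \eqref{equ:main} thus reduces to showing that this remainder tends to $0$ as $D \to \infty$, uniformly in $x$.

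The main obstacle is the tail bound. The trivial estimate $|T_D(x)| \leq \sum_{D < d \leq x/k} 1/\ell(dk)$ is not obviously useful, since the lower bound $\ell(dk) \geq dk$ only yields the divergent harmonic series. What one really needs is that $\ell(dk)/(dk) = z(dk)/\gcd(dk, z(dk))$ is on average substantially larger than $1$, in the precise sense that $\sum_{d \geq 1} 1/\ell(dk) < \infty$. A lemma of this flavor — of the same type presumably already used to prove Theorem \ref{thm:exists}, and closely related to the zero-density statement for $\mathscr{B}$ recalled in the introduction — would furnish a bound on $|T_D(x)|$ that is uniform in $x$ and tends to $0$ as $D \to \infty$, by a Tannery-style interchange of limit and summation. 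With that ingredient in hand, one concludes $\mathbf{d}(\mathscr{A}_k) = \lim_{D \to \infty}\sum_{d \leq D}\mu(d)/\ell(dk) = \sum_{d=1}^{\infty}\mu(d)/\ell(dk)$, completing the proof.
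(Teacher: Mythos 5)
Your argument is correct and takes a genuinely more direct route than the paper's. The paper never inverts the condition $\gcd(n,F_n)=k$ in a single step: it introduces auxiliary sets $\mathscr{B}_k$ (those $n$ with $k \mid \gcd(n,F_n)$ and every prime dividing $\gcd(n,F_n)$ dividing $k$), computes $\mathbf{d}(\mathscr{B}_k)=\sum_{(d,k)=1}\mu(d)/\ell(dk)$ in Lemma~\ref{lem:Bkdens} via a M\"obius expansion over squarefree $d$ coprime to $k$, recovers $\#\mathscr{A}_k(x)=\sum_{d \mid k}\mu(d)\,\#\mathscr{B}_{dk}(x)$ by inclusion-exclusion, and only at the end recombines the two M\"obius sums into the single series \eqref{equ:main}. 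Your one-shot expansion $\mathbf{1}[\gcd(n,F_n)=k]=\sum_{d\geq 1}\mu(d)\,\mathbf{1}[dk\mid\gcd(n,F_n)]$ is valid (both sides vanish when $k\nmid\gcd(n,F_n)$, and all interchanges are finite-sum manipulations), and combined with Lemma~\ref{lem:basic}(v) it lands immediately on the exact identity $\#\mathscr{A}_k(x)=\sum_{d\geq 1}\mu(d)\lfloor x/\ell(dk)\rfloor$, which the paper reaches only after rearranging the double series in its final display; collapsing the two-stage argument this way is a real simplification. The one ingredient you leave unproved --- that $\sum_{d\geq 1}1/\ell(dk)<+\infty$ --- is exactly the paper's Lemma~\ref{lem:convergence} ($\sum_{n\geq 1}1/\ell(n)<+\infty$, proved via the divisor bound, Mertens' formula, and the counting estimate of Lemma~\ref{lem:Qgamma}), and your surmise is accurate: it is indeed the analytic engine behind Theorem~\ref{thm:exists}, so citing it is legitimate. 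To apply it to your tail, note that $d \mid dk$ gives $z(d)\mid z(dk)$ and hence $\ell(d)\mid\ell(dk)$, so $|T_D(x)|\leq\sum_{d>D}1/\ell(dk)\leq\sum_{d>D}1/\ell(d)\to 0$ uniformly in $x$; as you correctly diagnose, the bound $\ell(dk)\geq dk$ alone cannot do this, but once Lemma~\ref{lem:convergence} is in hand the ``trivial'' estimate is precisely the one that closes the proof. A final remark: with that uniform tail bound, your appeal to Theorem~\ref{thm:exists} is superfluous --- the Tannery argument by itself shows $\#\mathscr{A}_k(x)/x\to\sum_{d\geq 1}\mu(d)/\ell(dk)$, re-deriving the existence of the density as a byproduct, which parallels how the paper's Lemma~\ref{lem:Bkdens} handles its error term $R(x)$ (splitting at $d=x^{1/2}$ rather than at a fixed cut-off $D$).
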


\subsection*{Notation}

Throughout, we reserve the letters $p$ and $q$ for prime numbers.
For a set of positive integers $\mathscr{S}$, we put $\mathscr{S}(x) := \mathscr{S} \cap [1,x]$ for all $x \geq 1$, and we recall that the asymptotic density $\mathbf{d}(\mathscr{S})$ of $\mathscr{S}$ is defined as the limit of the ratio $\#\mathscr{S}(x) / x$, as $x \to +\infty$, whenever this exists. 
As usual, $\mu(n)$, $\tau(n)$, and $P(n)$, denote the M\"obius function, the number of divisors of a positive integer $n$, and the greatest prime factor of an integer $n > 1$, respectively.
We employ the Landau--Bachmann ``Big Oh'' and ``little oh'' notations $O$ and $o$, as well as the associated Vinogradov symbol $\ll$.

\section{Preliminaries}

The next lemma summarizes some basic properties of $\ell$, $z$, and the Fibonacci numbers, which we will implicitly use later without further mention.

\begin{lem}\label{lem:basic}
For all positive integers $m$, $n$ and all prime numbers $p$, we have:
\begin{enumerate}[(i)]
\item $m \mid F_n$ if and only if $z(m) \mid n$.
\item $z(\lcm(m, n)) = \lcm(z(m), z(n))$.
\item $z(p) \mid p - \left(\frac{p}{5}\right)$, where $\left(\frac{p}{5}\right)$ is a Legendre symbol.
\item $\nu_p(F_n) \geq \nu_p(n)$ whenever $z(p) \mid n$.
\item $m \mid \gcd(n, F_n)$ if and only if $\ell(m) \mid n$.
\item $\ell(\lcm(m, n)) = \lcm(\ell(m), \ell(n))$.
\item $\ell(p) = p z(p)$ for $p \neq 5$, while $\ell(5) = 5$.
\end{enumerate}
\end{lem}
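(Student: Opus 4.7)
The plan is to establish the seven items in an order that reflects their logical dependencies: the classical Fibonacci facts (i), (iii), (iv) first, then the formal consequences (ii), (v), (vi), (vii).

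For (i), I would use the strong divisibility property $\gcd(F_m, F_n) = F_{\gcd(m,n)}$. The ``if'' direction is immediate since $F_{z(m)} \mid F_n$ whenever $z(m) \mid n$, together with $m \mid F_{z(m)}$. For the converse, if $m \mid F_n$, set $d := \gcd(z(m), n)$; then $m$ divides both $F_{z(m)}$ and $F_n$, hence $m \mid F_d$, and by minimality of $z(m)$ this forces $d = z(m)$. For (iii) I would appeal to the classical Lucas congruence $p \mid F_{p - (p/5)}$, which combined with (i) gives $z(p) \mid p - (p/5)$. For (iv) the cleanest route is to invoke the standard $p$-adic valuation formula for Fibonacci numbers (due to Lengyel): when $z(p) \mid n$, writing $n = z(p) m$, one has $\nu_p(F_n) = \nu_p(F_{z(p)}) + \nu_p(m)$ for $p$ odd (with a small adjustment at $p = 2$). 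Combined with (iii), which ensures $p \nmid z(p)$ when $p \neq 5$, and the direct computation $z(5) = 5$ with $\nu_5(F_5) = 1$, this yields $\nu_p(F_n) \geq \nu_p(n)$ in every case.

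Items (ii), (v), (vi) are then formal consequences of (i) and the definition of $\ell$. For (ii), $\lcm(m,n) \mid F_k$ iff both $m, n \mid F_k$ iff $\lcm(z(m), z(n)) \mid k$, and the least such $k$ is $\lcm(z(m), z(n))$. For (v), the condition $m \mid \gcd(n, F_n)$ unfolds via (i) as $m \mid n$ and $z(m) \mid n$, that is, $\ell(m) \mid n$. For (vi), combine (ii) with the definition of $\ell$ to get $\ell(\lcm(m,n)) = \lcm(m, n, z(m), z(n)) = \lcm(\ell(m), \ell(n))$. Finally (vii) follows directly from (iii): for $p \neq 5$, $z(p)$ divides $p \pm 1$ and is therefore coprime to $p$, so $\ell(p) = p \, z(p)$; and $z(5) = 5$ gives $\ell(5) = 5$.

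The main obstacle is (iv); the remaining items are either textbook-level Fibonacci facts or immediate from the definition of $\ell$, so I expect no difficulty beyond locating the cleanest reference for the $p$-adic valuation formula and handling the $p = 2$ case separately.
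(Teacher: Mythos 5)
Your proposal is correct and takes essentially the same route as the paper, whose proof simply cites the classical literature for (i)--(iii), invokes Lengyel's formulas for $\nu_p(F_n)$ to get (iv), and observes that (v)--(vii) are easy formal consequences of (i)--(iii) and the definition of $\ell$. The details you supply---the strong divisibility identity $\gcd(F_m, F_n) = F_{\gcd(m,n)}$ for (i) and (ii), the coprimality of $p$ and $z(p)$ for $p \neq 5$ in (vii), and the separate handling of $p = 2$ and $p = 5$ in (iv)---are precisely the standard arguments behind those citations.
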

\begin{proof}
Facts (i)--(iii) are well-known (see, e.g.,~\cite{MR3141741}).
Fact (iv) follows quickly from the formulas for $\nu_p(F_n)$ given by Lengyel~\cite{MR1337793}.
Finally, (v)--(vii) are easy consequences of (i)--(iii) and the definition of $\ell$.
\end{proof}

Now we state an easy criterion to establish if $\mathscr{A}_k \neq \varnothing$ \cite[Lemma~2.2(iii)]{LS_preprint}.

\begin{lem}\label{lem:Aknonempty}
$\mathscr{A}_k \neq \varnothing$ if and only if $k = \gcd(\ell(k), F_{\ell(k)})$, for all integers $k \geq 1$.
\end{lem}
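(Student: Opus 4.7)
The plan is to prove the two directions separately, leveraging part (v) of Lemma~\ref{lem:basic}, which characterizes $k \mid \gcd(n, F_n)$ as $\ell(k) \mid n$, together with the divisibility $F_m \mid F_n$ whenever $m \mid n$.

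For the easy direction, assume $k = \gcd(\ell(k), F_{\ell(k)})$. Then the integer $n := \ell(k)$ itself satisfies $\gcd(n, F_n) = k$ by hypothesis, so $n \in \mathscr{A}_k$ and in particular $\mathscr{A}_k \neq \varnothing$.

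For the converse, pick any $n \in \mathscr{A}_k$, so that $\gcd(n, F_n) = k$. In particular $k$ divides $\gcd(n, F_n)$, and Lemma~\ref{lem:basic}(v) converts this into the divisibility $\ell(k) \mid n$. Now set $d := \gcd(\ell(k), F_{\ell(k)})$; I will show $d = k$ by proving two divisibilities. On the one hand, applying Lemma~\ref{lem:basic}(v) with $n$ replaced by $\ell(k)$ and noting the trivial $\ell(k) \mid \ell(k)$, we get $k \mid d$. On the other hand, since $\ell(k) \mid n$, the standard Fibonacci property yields $F_{\ell(k)} \mid F_n$, and combining with $\ell(k) \mid n$ gives $d \mid n$ and $d \mid F_n$; hence $d \mid \gcd(n, F_n) = k$. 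Together, $d = k$, which is the required identity.

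I do not anticipate any real obstacle here: the only content is the manipulation of the two characterizations (``$\ell$ divides $n$'' versus ``$k$ divides the gcd'') provided by Lemma~\ref{lem:basic}(v), plus the elementary fact that $m \mid n$ implies $F_m \mid F_n$. The slightly subtle point is recognizing that one should take $n = \ell(k)$ as the natural witness in the easy direction, since $\ell(k)$ is the minimal $n$ divisible by $\ell(k)$, hence the minimal candidate on which to test whether $\gcd(n, F_n)$ equals $k$ rather than merely being a multiple of $k$.
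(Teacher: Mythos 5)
Your proof is correct. There is nothing internal to compare it against: the paper does not prove this lemma at all, but imports it from Leonetti--Sanna \cite[Lemma~2.2(iii)]{LS_preprint}, so your argument in effect supplies the missing self-contained proof. The forward direction (witness $n = \ell(k)$) is immediate, and your converse is sound: from $k \mid \gcd(n, F_n)$, Lemma~\ref{lem:basic}(v) gives $\ell(k) \mid n$; applying (v) again with $n = \ell(k)$ gives $k \mid d$ where $d := \gcd(\ell(k), F_{\ell(k)})$ (note this divisibility holds unconditionally, so the real content of the lemma is the reverse one); and $d \mid n$, $d \mid F_n$ then force $d \mid k$. The one ingredient you invoke that is not explicitly listed in Lemma~\ref{lem:basic} is the classical divisibility $m \mid n \Rightarrow F_m \mid F_n$; this is folklore, and in any case it follows from Lemma~\ref{lem:basic}(i), since $F_m \mid F_m$ gives $z(F_m) \mid m \mid n$ and hence $F_m \mid F_n$. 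If you wanted to avoid it entirely, you could argue within the stated toolkit: $d \mid \ell(k)$ and $d \mid F_{\ell(k)}$ give $z(d) \mid \ell(k)$, hence $\ell(d) = \lcm(d, z(d)) \mid \ell(k) \mid n$, and Lemma~\ref{lem:basic}(v) then yields $d \mid \gcd(n, F_n) = k$ directly; but this is a stylistic variant, not a correction.
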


If $\mathscr{S}$ is a set of positive integers, we define its \emph{set of nonmultiples} as
\begin{equation*}
\mathscr{N}(\mathscr{S}) := \{n \geq 1 : s \nmid n \text{ for all } s \in \mathscr{S} \} .
\end{equation*}
Sets of nonmultiples, or more precisely their complement \emph{sets of multiples}
\begin{equation*}
\mathscr{M}(\mathscr{S}) := \{n \geq 1 : s \mid n \text{ for some } s \in \mathscr{S} \} ,
\end{equation*}
have been studied by several authors, we refer the reader to~\cite{MR1414678} for a systematic treatment of this topic.
We shall need only the following result.

\begin{lem}\label{lem:thinset}
If $\mathscr{S}$ is a set of positive integers such that
\begin{equation*}
\sum_{s \in \mathscr{S}} \frac1{s} < +\infty ,
\end{equation*}
then $\mathscr{N}(\mathscr{S})$ has an asymptotic density.
Moreover, if $1 \notin \mathscr{S}$ then $\mathbf{d}(\mathscr{N}(\mathscr{S})) > 0$.
\end{lem}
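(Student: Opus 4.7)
The plan is to approximate $\mathscr{N}(\mathscr{S})$ by the set of nonmultiples of the finite truncation $\mathscr{S}_y := \mathscr{S} \cap [1,y]$, and then let $y \to \infty$; the summability of $1/s$ makes the tail contribute arbitrarily little.

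For any finite $T \subseteq \mathscr{S}$, the set $\mathscr{N}(T)$ is a union of residue classes modulo $\lcm(T)$, hence its density exists and can be written as the inclusion--exclusion sum $\sum_{F \subseteq T}(-1)^{|F|}/\lcm(F)$. Since $\mathscr{N}(\mathscr{S}) \subseteq \mathscr{N}(\mathscr{S}_y)$ and $\mathscr{N}(\mathscr{S}_y) \setminus \mathscr{N}(\mathscr{S}) \subseteq \bigcup_{s \in \mathscr{S},\,s > y}\{n \geq 1 : s \mid n\}$, a set of upper density at most $\sum_{s > y} 1/s$, one obtains the sandwich
\begin{equation*}
\mathbf{d}(\mathscr{N}(\mathscr{S}_y)) - \sum_{\substack{s \in \mathscr{S} \\ s > y}} \frac{1}{s} \;\leq\; \underline{\mathbf{d}}(\mathscr{N}(\mathscr{S})) \;\leq\; \overline{\mathbf{d}}(\mathscr{N}(\mathscr{S})) \;\leq\; \mathbf{d}(\mathscr{N}(\mathscr{S}_y)) ,
\end{equation*}
and letting $y \to \infty$ makes the tail vanish, giving the existence of $\mathbf{d}(\mathscr{N}(\mathscr{S}))$.

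For the positivity, assuming $1 \notin \mathscr{S}$, I would invoke the Heilbronn--Rohrbach correlation inequality, classical in the theory of sets of multiples (cf.\ \cite{MR1414678}), which applied to the events $\{s \mid n\}$ states
\begin{equation*}
\mathbf{d}(\mathscr{N}(T)) \;\geq\; \prod_{s \in T}\!\left(1 - \frac{1}{s}\right) \quad \text{for every finite } T \subseteq \mathscr{S} .
\end{equation*}
Taking $T = \mathscr{S}_y$ and passing $y \to \infty$ in the sandwich yields
\begin{equation*}
\mathbf{d}(\mathscr{N}(\mathscr{S})) \;\geq\; \prod_{s \in \mathscr{S}}\!\left(1 - \frac{1}{s}\right) > 0,
\end{equation*}
the strict positivity of the product being equivalent to the convergence of $\sum 1/s$ because each factor $1 - 1/s \geq 1/2$ is bounded away from $0$.

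The main obstacle is the correlation inequality: it is not completely elementary, although a proof by induction on $|T|$ using a two-event FKG-type estimate inside $\mathbb{Z}/\lcm(T)\mathbb{Z}$ is available, and the result can simply be cited from Hall's monograph. Everything else is a routine truncation-and-tail argument that only uses the absolute summability of $1/s$.
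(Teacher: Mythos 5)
Your proposal is correct, and in substance it tracks the paper's proof, which consists of two citations: Erd\H{o}s for the existence of $\mathbf{d}(\mathscr{N}(\mathscr{S}))$, and the Heilbronn--Rohrbach bound $\mathbf{d}(\mathscr{N}(\mathscr{S})) \geq \prod_{s \in \mathscr{S}}\left(1 - \frac1{s}\right)$ for the positivity. Your positivity half is identical in spirit: you cite the same correlation inequality (in its finite-set form, from Hall's monograph) and then pass to the limit through your sandwich, which is a legitimate derivation of the infinite-set form that the paper quotes directly from Heilbronn and Rohrbach. Where you genuinely differ is the existence half: instead of citing Erd\H{o}s you give the standard truncation argument --- $\mathscr{N}(\mathscr{S}_y)$ is a union of residue classes modulo $\lcm(\mathscr{S}_y)$ and so has a density, the difference $\mathscr{N}(\mathscr{S}_y) \setminus \mathscr{N}(\mathscr{S})$ has upper density at most $\sum_{s > y} 1/s$, and summability kills the tail as $y \to \infty$. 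This is valid and is essentially the argument underlying the cited Erd\H{o}s result in the summable case; it buys self-containedness at the cost of a few lines, whereas the paper's version is purely referential. It is worth noting (and your argument implicitly respects this) that the hypothesis $\sum_{s \in \mathscr{S}} 1/s < +\infty$ is doing real work in the existence half, not just the positivity half: for general $\mathscr{S}$ the natural density of a set of multiples need not exist (Besicovitch), so the sandwich genuinely needs the vanishing tail. Your positivity conclusion is also handled correctly: $1 \notin \mathscr{S}$ guarantees every factor satisfies $1 - 1/s \geq 1/2$, and convergence of $\sum 1/s$ then makes the infinite product strictly positive.
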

\begin{proof}
The part about the existence of $\mathbf{d}(\mathscr{N}(\mathscr{S}))$ is due to Erd{\H o}s~\cite{MR1574879}, while the second assertion follows easily from the inequality
\begin{equation*}
\mathbf{d}(\mathscr{N}(\mathscr{S})) \geq \prod_{s \in \mathscr{S}} \left(1 - \frac1{s}\right)
\end{equation*}
proved by Heilbronn~\cite{Hei37} and Rohrbach~\cite{MR1581555}.
\end{proof}

For any $\gamma > 0$, let us define
\begin{equation*}
\mathscr{Q}_\gamma := \{p : z(p) \leq p^{\gamma}\} .
\end{equation*}
The following is a well-known lemma, which belongs to the folklore.

\begin{lem}\label{lem:Qgamma}
For all $x,\gamma > 0$, we have $\#\mathscr{Q}_\gamma(x) \ll x^{2 \gamma}$.
\end{lem}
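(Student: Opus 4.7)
The plan is to bound $\#\mathscr{Q}_\gamma(x)$ by partitioning the primes $p \in \mathscr{Q}_\gamma(x)$ according to the value of $z(p)$, and then exploiting the fact that the primes sharing a common value of $z$ must all divide a single Fibonacci number, whose number of prime divisors we can estimate using Binet's bound $F_n \ll \phi^n$.

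More concretely, first I would observe that if $p \leq x$ and $z(p) \leq p^\gamma$, then $z(p) \leq x^\gamma$. Thus
\begin{equation*}
\#\mathscr{Q}_\gamma(x) \leq \sum_{n \leq x^\gamma} \#\{p : z(p) = n\}.
\end{equation*}
By property (i) of Lemma~\ref{lem:basic}, any prime $p$ with $z(p) = n$ satisfies $p \mid F_n$, so the inner cardinality is at most $\omega(F_n)$, the number of distinct prime divisors of $F_n$.

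Next I would use the trivial bound $\omega(F_n) \leq \log_2 F_n$, together with $F_n \leq \phi^n$, to get $\omega(F_n) \ll n$. Summing over $n \leq x^\gamma$ then yields
\begin{equation*}
\#\mathscr{Q}_\gamma(x) \ll \sum_{n \leq x^\gamma} n \ll x^{2\gamma},
\end{equation*}
which is the desired estimate.

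There is no real obstacle here: the argument is a short counting reduction, and the only inputs are the defining property of $z(p)$ and the exponential upper bound on $F_n$. The only mild point of care is handling $n=1,2$ (where $F_n = 1$, so $\omega(F_n) = 0$ and the estimate is trivially fine) and making sure the implicit constant swallows $\log_2 \phi$.
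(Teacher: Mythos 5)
Your proof is correct and is essentially the paper's argument: both rest on the observation that every $p \in \mathscr{Q}_\gamma(x)$ divides some $F_n$ with $n \leq x^\gamma$, so the count of such primes is at most $\sum_{n \leq x^\gamma} \log_2 F_n \ll \sum_{n \leq x^\gamma} n \ll x^{2\gamma}$. The paper merely packages this as the single chain $2^{\#\mathscr{Q}_\gamma(x)} \leq \prod_{p \in \mathscr{Q}_\gamma(x)} p \mid \prod_{n \leq x^\gamma} F_n \leq 2^{O(x^{2\gamma})}$ (using $F_n \leq 2^n$), where you disaggregate by the value of $z(p)$ and bound each fiber by $\omega(F_n) \ll n$; the content is identical.
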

\begin{proof}
It is enough noting that
\begin{equation*}
2^{\#\mathscr{Q}_\gamma(x)} \leq \prod_{p \in \mathscr{Q}_\gamma(x)} p \mid \prod_{n \leq x^{\gamma}} F_n \leq 2^{\sum_{n \leq x^{\gamma}} n} = 2^{O(x^{2\gamma})} ,
\end{equation*}
where we employed the inequality $F_n \leq 2^n$, valid for all positive integers $n$.
\end{proof}

\section{Proof of Theorem~\ref{thm:exists}}

We begin by showing that $\mathscr{A}_k$ is a scaled set of nonmultiples.

\begin{lem}\label{lem:AandL}
For each positive integer $k$ such that $\mathscr{A}_k \neq \varnothing$, we have 
\begin{equation*}
\mathscr{A}_k = \left\{\ell(k)m : m \in \mathscr{N}(\mathscr{L}_k) \right\} ,
\end{equation*}
where
\begin{equation*}
\mathscr{L}_k := \{p : p \mid k\} \cup \{\ell(kp) / \ell(k) : p \nmid k\} .
\end{equation*}
\end{lem}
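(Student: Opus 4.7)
The plan is to rewrite the condition $\gcd(n, F_n) = k$ as a list of divisibility/non-divisibility constraints and then transport each to a condition on $m$ via Lemma~\ref{lem:basic}(v). The starting observation is that $\gcd(n, F_n) = k$ is equivalent to the pair of conditions $k \mid \gcd(n, F_n)$ together with $kp \nmid \gcd(n, F_n)$ for every prime $p$: any strict excess over $k$ must be witnessed by some prime $p$ for which $\nu_p(\gcd(n, F_n)) > \nu_p(k)$. Lemma~\ref{lem:basic}(v) turns these into $\ell(k) \mid n$ and $\ell(kp) \nmid n$ for every $p$. Thus $n = \ell(k) m$ for a uniquely determined positive integer $m$, and the task reduces to showing that the family of conditions $\ell(kp) \nmid \ell(k) m$ (for $p$ ranging over all primes) is equivalent to $s \nmid m$ for every $s \in \mathscr{L}_k$.

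When $p \nmid k$, the argument is routine. Using the monotonicity $a \mid b \Rightarrow z(a) \mid z(b)$ (immediate from $a \mid b \mid F_{z(b)}$), one deduces $\ell(k) \mid \ell(kp)$, so $\ell(kp) \mid \ell(k) m$ is equivalent to $\ell(kp)/\ell(k) \mid m$, which is exactly the second half of $\mathscr{L}_k$.

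The delicate case is $p \mid k$, and here the hypothesis $\mathscr{A}_k \neq \varnothing$ enters via Lemma~\ref{lem:Aknonempty}. My key intermediate claim is that $\nu_p(\ell(k)) = \nu_p(k)$ for every prime $p$ dividing $k$. To prove it, the identity $k = \gcd(\ell(k), F_{\ell(k)})$ gives $\nu_p(k) = \min(\nu_p(\ell(k)), \nu_p(F_{\ell(k)}))$; since $p \mid k$ forces $z(p) \mid z(k) \mid \ell(k)$, Lemma~\ref{lem:basic}(iv) yields $\nu_p(F_{\ell(k)}) \geq \nu_p(\ell(k))$, so the minimum must equal $\nu_p(\ell(k))$, proving the claim. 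Granted this, $\nu_p(n) = \nu_p(\ell(k)) + \nu_p(m) = \nu_p(k) + \nu_p(m)$, and a second application of Lemma~\ref{lem:basic}(iv) (again using $z(p) \mid \ell(k) \mid n$) gives $\nu_p(\gcd(n, F_n)) = \nu_p(n)$; so $\ell(kp) \mid n$ is equivalent to $\nu_p(n) \geq \nu_p(k) + 1$, which is precisely $p \mid m$.

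Combining the two cases gives $n \in \mathscr{A}_k$ if and only if $n = \ell(k) m$ with $m$ not divisible by any element of $\mathscr{L}_k$, which is the desired description. The main obstacle in this argument is the subclaim $\nu_p(\ell(k)) = \nu_p(k)$ for $p \mid k$: it is the only place where the nonemptiness hypothesis is genuinely used, and its proof hinges on combining Lemma~\ref{lem:basic}(iv) with the characterization from Lemma~\ref{lem:Aknonempty}. Everything else is bookkeeping with $p$-adic valuations.
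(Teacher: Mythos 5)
Your proof is correct and follows essentially the same route as the paper: both arguments split into the cases $p \mid k$ and $p \nmid k$, both reduce the case $p \nmid k$ to the condition $\ell(kp)/\ell(k) \nmid m$, and both use the nonemptiness hypothesis through Lemma~\ref{lem:Aknonempty} together with Lemma~\ref{lem:basic}(iv) to establish the same key identity $\nu_p(\ell(k)) = \nu_p(k)$ for $p \mid k$ (the paper gets it by specializing its valuation formula at $m = 1$, you by taking the minimum of valuations directly --- the same computation). Your opening reformulation of $\gcd(n, F_n) = k$ as ``$\ell(k) \mid n$ and $\ell(kp) \nmid n$ for all primes $p$'' via Lemma~\ref{lem:basic}(v) is a slightly tidier packaging than the paper's prime-by-prime condition $\nu_p(\gcd(\ell(k)m, F_{\ell(k)m})) = \nu_p(k)$, but the substance is identical.
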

\begin{proof}
We know that $n \in \mathscr{A}_k$ implies $\ell(k) \mid n$. 
Hence, it is enough to prove that $\ell(k)m \in \mathscr{A}_k$, for some positive integer $m$, if and only if $m \in \mathscr{N}(\mathscr{L}_k)$.

Clearly, $\ell(k)m \in \mathscr{A}_k$ for some positive integer $m$, if and only if
\begin{equation}\label{equ:vpgcd0}
\nu_p(\gcd(\ell(k)m, F_{\ell(k)m})) = \nu_p(k)
\end{equation}
for all prime numbers $p$.

Let $p$ be a prime number dividing $k$.
Then, for all positive integer $m$, we have $z(p) \mid \ell(k)m$ and consequently $\nu_p(F_{\ell(k)m}) \geq \nu_p(\ell(k)m)$, so that
\begin{equation}\label{equ:vpgcd1}
\nu_p(\gcd(\ell(k)m, F_{\ell(k)m})) = \nu_p(\ell(k)m) = \nu_p(\ell(k)) + \nu_p(m) .
\end{equation}
In particular, recalling that $k = \gcd(\ell(k), F_{\ell(k)})$ since $\mathscr{A}_k \neq \varnothing$ and thanks to Lemma~\ref{lem:Aknonempty}, for $m = 1$ we get
\begin{equation*}
\nu_p(k) = \nu_p(\gcd(\ell(k), F_{\ell(k)})) = \nu_p(\ell(k)) ,
\end{equation*}
which together with (\ref{equ:vpgcd1}) gives
\begin{equation}\label{equ:vpgcd2}
\nu_p(\gcd(\ell(k)m, F_{\ell(k)m})) = \nu_p(k) + \nu_p(m) .
\end{equation}
Therefore, (\ref{equ:vpgcd0}) holds if and only if $p \nmid m$.

Now let $p$ be a prime number not dividing $k$.
Then (\ref{equ:vpgcd0}) holds if and only if 
\begin{equation*}
p \nmid \gcd(\ell(k)m, F_{\ell(k)m}) .
\end{equation*}
That is, $\ell(p) \nmid \ell(k) m$, which in turn is equivalent to
\begin{equation*}
\frac{\ell(kp)}{\ell(k)} = \frac{\lcm(\ell(k), \ell(p))}{\ell(k)} \nmid m ,
\end{equation*}
since $p$ and $k$ are relatively prime.

Summarizing, we have found that $\ell(k)m \in \mathscr{A}_k$, for some positive integer $m$, if and only if $p \nmid m$ for all prime numbers $p$ dividing $k$, and $\ell(kq)/\ell(k) \nmid m$ for all prime numbers $q$ not dividing $k$, that is, $m \in \mathscr{N}(\mathscr{L}_k)$.
\end{proof}

Now we show that the series of the reciprocals of the $\ell(n)$'s converges.
The methods employed are somehow similar to those used to prove the result of~\cite{MR2740727}.
(See also \cite{MR3125150} for a wide generalization of that result.)

\begin{lem}\label{lem:convergence}
The series
\begin{equation*}
\sum_{n = 1}^\infty \frac1{\ell(n)}
\end{equation*}
converges.
\end{lem}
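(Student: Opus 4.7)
The plan is to bound $\sum_n 1/\ell(n)$ by controlling the contribution of each prime factor of $n$, with the sparsity of primes in $\mathscr{Q}_\gamma$ (via Lemma~\ref{lem:Qgamma}) providing the crucial savings. A short calculation combining Lemma~\ref{lem:basic}(iv), (vii) and $\gcd(p, z(p)) = 1$ for $p\neq 5$ yields the formula $\ell(p^a) = p^a z(p)$ for all $a\geq 1$ and $p\neq 5$; combined with $\ell(n) = \lcm_{p^a\|n} \ell(p^a)$ from Lemma~\ref{lem:basic}(vi), this gives $\ell(n)\geq p^{\nu_p(n)}z(p)$ for every prime $p\mid n$ with $p\neq 5$.

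The key preliminary estimate is $\sum_p 1/(pz(p)) < \infty$. Fix any $\gamma\in (0,1/2)$. For $p\notin\mathscr{Q}_\gamma$, the bound $z(p)>p^\gamma$ gives terms $\leq 1/p^{1+\gamma}$, summable. For $p\in\mathscr{Q}_\gamma$, the weaker bound $1/(pz(p))\leq 1/p$ together with $\#\mathscr{Q}_\gamma(x)\ll x^{2\gamma}$ from Lemma~\ref{lem:Qgamma} and partial summation yields convergence since $2\gamma<1$.

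To deduce the full convergence, I would write each $n\geq 2$ with $P(n)=p>5$ as $n=p^a m$, where $a=\nu_p(n)\geq 1$ and $P(m)<p$. Using Lemma~\ref{lem:basic}(iii), one verifies that for any prime $q<p$ the condition $p\mid z(q)$ forces $q=p-1$, which is composite for $p>3$; hence $p\nmid z(m)$ and in particular $p\nmid \ell(m)$. Lemma~\ref{lem:basic}(vi) then reduces to
\[
\ell(p^a m) = \frac{p^a\, z(p)\,\ell(m)}{\gcd(z(p), \ell(m))}.
\]
Summing $1/\ell(p^a m)$ over $a\geq 1$ and over $m$ with $P(m)<p$ yields a recursive inequality of the shape $S_p\leq S_{q}\bigl(1+O(\tau(z(p))/(p\,z(p)))\bigr)$, where $S_p := \sum_{P(n)\leq p}1/\ell(n)$ and $q$ is the prime just before $p$. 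The summability of $\tau(z(p))/(pz(p))$ follows by the same splitting at $\mathscr{Q}_\gamma$, since the logarithmic factor $\tau(z(p))\ll(\log p)^{O(1)}$ does not spoil either regime, and so the telescoping product is finite.

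The main obstacle will be the averaging argument controlling how often $d\mid \ell(m)$ as $m$ varies: one must show that for each divisor $d$ of $z(p)$ the sum $\sum_{P(m)<p,\,d\mid\ell(m)}1/\ell(m)$ is at most a constant times $S_q/d$, by distinguishing the cases $d\mid m$ and $d\mid z(m)$ and iterating the multiplicative structure of $\ell$. The small primes $p\leq 5$ only contribute a bounded amount since $5$-smooth numbers already satisfy $\sum_{P(n)\leq 5} 1/n<\infty$, and $\ell(n)\geq n$ handles these terms directly.
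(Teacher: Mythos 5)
Your structural reductions are sound: $\ell(p^a)=p^a z(p)$ for $p\neq 5$, the fact that $P(m)<p$ forces $p\nmid z(m)$ for $p>3$, and the resulting exact formula for $\ell(p^a m)$ all check out. But the proof has a genuine gap at exactly the step you flag as ``the main obstacle'' and then leave unresolved: the averaging claim that $\sum_{P(m)<p,\ d\mid \ell(m)} 1/\ell(m)$ is at most an absolute constant times $S_q/d$, which is what your \emph{multiplicative} recursion $S_p\leq S_q\bigl(1+O(\tau(z(p))/(p\,z(p)))\bigr)$ requires. In the case $d\mid z(m)$ this bound is dubious as stated: divisibility of $\ell(m)$ by $d$ need not cost a factor $d$ relative to the smooth sum $S_q$, because adjoining prime factors of $d$ to $m$ need not increase $\ell$ at all (if $\nu_{q'}(z(m'))>\nu_{q'}(m')$ then $\ell(q'm')=\ell(m')$), so there is no injection pushing such $m$ down to smooth numbers with $\ell$-value $\geq d\,\ell(m')$ up to constants. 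What \emph{is} provable cheaply is weaker: $d\mid\ell(m)$ gives $\ell(m)\geq\lcm(m,d)=md/\gcd(m,d)$, whence $\sum_{P(m)<p,\ d\mid\ell(m)}1/\ell(m)\leq d^{-1}\sum_{P(m)<p}\gcd(m,d)/m\ll \tau(d)(\log p)/d$ by Mertens' formula. This salvages your argument, but it converts the recursion into an \emph{additive} one, $S_p\leq S_{q}+O\bigl(\tau_3(z(p))\log p/(p\,z(p))\bigr)$ with $\tau_3(n):=\sum_{d\mid n}\tau(d)$, and summing these increments over $p$ is then essentially identical to the paper's estimate: the paper skips the recursion entirely by mapping each $n$ to the pair $\bigl(P(n),\lcm(n,z(P(n)))\bigr)$, noting each pair has at most $\tau(z(p))$ preimages and $\lcm(n,z(p))=\ell(p)m$ with $P(m)\leq p+1$, which lands directly on $\sum_p \tau(z(p))\log p/(p\,z(p))$.

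A second, smaller but outright false assertion: $\tau(z(p))\ll(\log p)^{O(1)}$ is wrong, since the maximal order of the divisor function is $2^{(1+o(1))\log n/\log\log n}$, which exceeds every fixed power of $\log n$. The correct tool is $\tau(n)\ll_\varepsilon n^\varepsilon$ together with $z(p)\leq p+1$, giving $\tau(z(p))\log p\ll p^{1-\beta}$ for any fixed $\beta<1$ (the paper takes $\beta=3/4$); with this replacement your $\mathscr{Q}_\gamma$ splitting does close, since for $p\in\mathscr{Q}_\gamma$ partial summation against $\#\mathscr{Q}_\gamma(x)\ll x^{2\gamma}$ needs only $\beta>2\gamma$, and for $p\notin\mathscr{Q}_\gamma$ one needs only $\beta+\gamma>1$. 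Your preliminary estimate $\sum_p 1/(p\,z(p))<\infty$ and its proof are correct, and your telescoping framework is a workable alternative route once repaired as above, but as written the proposal both asserts a key inequality it cannot justify in the claimed strength and relies on a false growth bound for $\tau$.
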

\begin{proof}
Let $n > 1$ be an integer and put $p := P(n)$.
Clearly, $\lcm(n, z(p))$ is divisible by $\ell(p)$.
Hence, we can write $\lcm(n, z(p)) = \ell(p) m$, where $m$ is a positive integer such that $P(m) \leq p + 1$.
Also, if $p$ and $\lcm(n, z(p))$ are known then $n$ can be chosen in at most $\tau(z(p))$ ways.
Therefore,
\begin{equation*}
\sum_{n = 2}^\infty \frac1{\ell(n)} \leq \sum_{n = 2}^\infty \frac1{\lcm(n, z(P(n)))} \ll \sum_{p} \frac{\tau(z(p))}{p z(p)}\sum_{P(m) \leq p + 1} \frac1{m} ,
\end{equation*}
where we also used the fact that $\ell(p) \gg p z(p)$ for each prime number $p$.
By Mertens' formula~\cite[Chapter~I.1, Theorem~11]{MR1342300}, we have
\begin{equation*}
\sum_{P(m) \leq p + 1} \frac1{m} \leq \prod_{q \leq p + 1} \left(1 - \frac1{q}\right)^{-1} \ll \log p ,
\end{equation*}
for all prime numbers $p$.
Put $\beta := 3/4$ and $\gamma := 1/3$.
It is well known~\cite[Chapter~I.5, Corollary~1.1]{MR1342300} that $\tau(n) \ll_\varepsilon n^\varepsilon$ for any fixed $\varepsilon > 0$.
Hence, $\tau(z(p))\log p \ll p^{1 - \beta}$ for all prime numbers $p$.
Thus we have found that
\begin{equation}\label{equ:conv1}
\sum_{n = 1}^\infty \frac1{\ell(n)} \ll \sum_{p} \frac{\tau(z(p)) \log p}{p z(p)} \ll \sum_{p} \frac1{p^{\beta} z(p)} .
\end{equation}
On the one hand, by partial summation and by Lemma~\ref{lem:Qgamma}, we have
\begin{equation}\label{equ:conv2}
\sum_{p \in \mathscr{Q}_\gamma} \frac1{p^{\beta} z(p)} \leq \sum_{p \in \mathscr{Q}_\gamma} \frac1{p^{\beta}} = \left. \frac{\#\mathscr{Q}_\gamma(t)}{t^{\beta}}\right|_{t = 2}^{+\infty} + \beta\int_2^{+\infty} \frac{\#\mathscr{Q}_\gamma(t)}{t^{\beta + 1}}\,\mathrm{d}t \ll 1 ,
\end{equation}
since $\beta > 2\gamma$.
On the other hand, by the definition of $\mathscr{Q}_\gamma$, we have
\begin{equation}\label{equ:conv3}
\sum_{p \notin \mathscr{Q}_\gamma} \frac1{p^{\beta} z(p)} < \sum_{p} \frac1{p^{\beta + \gamma}} \ll 1 ,
\end{equation}
since $\beta + \gamma > 1$.
Hence, putting together (\ref{equ:conv1}), (\ref{equ:conv2}), and (\ref{equ:conv3}), we get the claim.
\end{proof}

Now we are ready for the proof of Theorem~\ref{thm:exists}.
If $k$ is a positive integer such that $\mathscr{A}_k = \varnothing$ then, obviously, the asymptotic density of $\mathscr{A}_k$ exists and is equal to zero.
So we can assume $\mathscr{A}_k \neq \varnothing$, which in turn, by Lemma~\ref{lem:Aknonempty}, implies that $k = \gcd(\ell(k), F_{\ell(k)})$.
Thanks to Lemma~\ref{lem:convergence}, we have
\begin{equation*}
\sum_{n \in \mathscr{L}_k} \frac1{n} \ll \sum_{p} \frac1{\ell(kp)} \leq \sum_{p} \frac1{\ell(p)} < +\infty ,
\end{equation*}
while clearly $1 \notin \mathscr{L}_k$.
Hence, Lemma~\ref{lem:thinset} tell us that $\mathscr{N}(\mathscr{L}_k)$ has a positive asymptotic density.
Finally, by Lemma~\ref{lem:AandL} we conclude that the asymptotic density of $\mathscr{A}_k$ exists and it is positive.

\section{Proof of Theorem~\ref{thm:main}}

We begin by introducing a family of sets.
For each positive integer $k$, let $\mathscr{B}_k$ be the set of positive integers $n$ such that:

\begin{enumerate}[(i)]
\item $k \mid \gcd(n, F_n)$;
\item if $p \mid \gcd(n, F_n)$ for some prime number $p$, then $p \mid k$.
\end{enumerate}
The essential part of the proof of Theorem~\ref{thm:main} is the following formula for the asymptotic density of $\mathscr{B}_k$.

\begin{lem}\label{lem:Bkdens}
For all positive integers $k$, the asymptotic density of $\mathscr{B}_k$ exists and
\begin{equation}\label{equ:densBk}
\mathbf{d}(\mathscr{B}_k) = \sum_{(d,k) = 1} \frac{\mu(d)}{\ell(dk)} ,
\end{equation}
where the series is absolutely convergent.
\end{lem}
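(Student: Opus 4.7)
The plan is to begin by rewriting the definition of $\mathscr{B}_k$ in a form amenable to M\"obius inversion. By Lemma~\ref{lem:basic}(v), condition (i) is equivalent to $\ell(k) \mid n$; and by the same lemma applied to each prime $p$, condition (ii) is equivalent to saying $\ell(p) \nmid n$ for every prime $p \nmid k$. Hence $n \in \mathscr{B}_k$ if and only if $\ell(k) \mid n$ and $\ell(p) \nmid n$ for all primes $p$ not dividing $k$.

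Next I would perform inclusion-exclusion. Expanding the indicator
\begin{equation*}
\mathbf{1}_{\mathscr{B}_k}(n) \;=\; \mathbf{1}_{\ell(k) \mid n}\prod_{p \nmid k}\bigl(1 - \mathbf{1}_{\ell(p) \mid n}\bigr),
\end{equation*}
and using Lemma~\ref{lem:basic}(vi) together with $(d,k)=1$ to identify $\lcm\bigl(\ell(k), \ell(p_1),\ldots,\ell(p_r)\bigr) = \ell(kd)$ whenever $d = p_1\cdots p_r$ is squarefree and coprime to $k$, one obtains the pointwise identity $\mathbf{1}_{\mathscr{B}_k}(n) = \sum_{(d,k)=1} \mu(d)\,\mathbf{1}_{\ell(kd)\mid n}$. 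Summing over $n \leq x$ gives the exact (and, for each $x$, finite, since $\ell(kd)\leq x$ forces $kd \leq x$) formula
\begin{equation*}
\#\mathscr{B}_k(x) \;=\; \sum_{(d,k) = 1} \mu(d) \left\lfloor \frac{x}{\ell(kd)} \right\rfloor.
\end{equation*}

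I would then establish absolute convergence of the series in (\ref{equ:densBk}). Because $d \mapsto kd$ is injective on $\{d \geq 1 : (d,k)=1\}$, its image is a subset of $\mathbb{Z}_{\geq 1}$, and hence
\begin{equation*}
\sum_{(d,k)=1} \frac{1}{\ell(kd)} \;\leq\; \sum_{n = 1}^\infty \frac{1}{\ell(n)} \;<\; +\infty
\end{equation*}
by Lemma~\ref{lem:convergence}.

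Finally, to extract the density, I would truncate at a parameter $y$. For each fixed $d$, $\lfloor x/\ell(kd)\rfloor/x \to 1/\ell(kd)$ as $x\to\infty$, so the head $\sum_{d\leq y,(d,k)=1}\mu(d)\lfloor x/\ell(kd)\rfloor/x$ tends to $\sum_{d\leq y,(d,k)=1}\mu(d)/\ell(kd)$; the tail $\bigl|\sum_{d>y,(d,k)=1}\mu(d)\lfloor x/\ell(kd)\rfloor/x\bigr|$ is dominated uniformly in $x$ by $\sum_{d>y,(d,k)=1}1/\ell(kd)$, which vanishes as $y\to\infty$ by the absolute convergence just proved. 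A standard $\varepsilon$/$\limsup$ argument, letting first $x\to\infty$ and then $y\to\infty$, yields (\ref{equ:densBk}). The only real obstacle is justifying the interchange of limit and infinite sum, and this is precisely where Lemma~\ref{lem:convergence} is indispensable; everything else is the bookkeeping of M\"obius inclusion-exclusion combined with the multiplicativity properties of $\ell$ collected in Lemma~\ref{lem:basic}.
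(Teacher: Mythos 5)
Your proposal is correct and takes essentially the same approach as the paper: a M\"obius expansion over squarefree $d$ coprime to $k$, reducing the count to the exact formula $\#\mathscr{B}_k(x) = \sum_{(d,k)=1} \mu(d) \lfloor x/\ell(dk) \rfloor$ via the identities $\ell(dk) = \lcm(\ell(d),\ell(k))$, with Lemma~\ref{lem:convergence} supplying both the absolute convergence and the control of the error. The differences are cosmetic: the paper expands the indicator through divisibility of $F_n$ (its function $\varrho$) and evaluates the inner sum after substituting $n = dm$, where you characterize each condition directly as $\ell(kd) \mid n$; and the paper bounds its remainder $R(x)$ by splitting at $d = x^{1/2}$, where you truncate at a parameter $y$ and let $x \to \infty$ first --- two equivalent dominated-convergence arguments.
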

\begin{proof}
For all positive integers $n$ and $d$, let us define
\begin{equation*}
\varrho(n, d) := \begin{cases} 1 & \text{ if } d \mid F_n, \\ 0 & \text{ if } d \nmid F_n .\end{cases}
\end{equation*}
Note that $\varrho$ is multiplicative in its second argument, that is,
\begin{equation*}
\varrho(n, de) = \varrho(n, d) \varrho(n, e)
\end{equation*}
for all relatively prime positive integers $d$ and $e$, and all positive integers $n$.

It is easy to see that $n \in \mathscr{B}_k$ if and only if $\ell(k) \mid n$ and $\varrho(n, p) = 0$ for all prime numbers $p$ dividing $n$ but not dividing $k$.
Therefore,
\begin{align}\label{equ:count1}
\#\mathscr{B}_k(x) &= \sum_{\substack{n \leq x \\ \ell(k) \,\mid\, n}} \prod_{\substack{p \,\mid\, n \\ p \,\nmid\, k}} (1 - \varrho(n, p)) = \sum_{\substack{n \leq x \\ \ell(k) \,\mid\, n}} \sum_{\substack{d \,\mid\, n \\ (d, k) = 1}} \mu(d) \varrho(n, d) \nonumber \\
&= \sum_{\substack{d \leq x \\ (d, k) = 1}} \mu(d) \sum_{\substack{m \leq x/d \\ \ell(k) \,\mid\, dm}} \varrho(dm, d) ,
\end{align}
for all $x > 0$.
Moreover, given a positive integer $d$ which is relatively prime with $k$, we have that $\varrho(dm, d) = 1$ and $\ell(k) \mid dm$ if and only if $\lcm(z(d), \ell(k)) \mid dm$, which in turn is equivalent to $m$ being divisible by
\begin{equation*}
\frac{\lcm(d, \lcm(z(d), \ell(k)))}{d} = \frac{\lcm(\ell(d), \ell(k))}{d} = \frac{\ell(dk)}{d} ,
\end{equation*}
since $d$ and $k$ are relatively prime.
Hence,
\begin{equation*}
\sum_{\substack{m \leq x/d \\ \ell(k) \,\mid\, dm}} \varrho(dm, d) = \sum_{\substack{m \leq x / d \\ \ell(dk)/d \,\mid\, m}} 1 = \left\lfloor\frac{x}{\ell(dk)}\right\rfloor ,
\end{equation*}
for all $x > 0$, which together with (\ref{equ:count1}) implies that
\begin{equation}\label{equ:count2}
\#\mathscr{B}_k(x) = \sum_{\substack{d \leq x \\ (d, k) = 1}} \mu(d) \left\lfloor \frac{x}{\ell(dk)}\right\rfloor = x \sum_{\substack{d \leq x \\ (d, k) = 1}} \frac{\mu(d)}{\ell(dk)} - R(x) ,
\end{equation}
for all $x > 0$, where
\begin{equation*}
R(x) := \sum_{\substack{d \leq x \\ (d, k) = 1}} \mu(d) \left\{ \frac{x}{\ell(dk)}\right\} .
\end{equation*}

Now, thanks to Lemma~\ref{lem:convergence}, we have
\begin{equation*}
\sum_{(d,k) = 1} \frac{|\mu(d)|}{\ell(dk)} \leq \sum_{d = 1}^{\infty} \frac1{\ell(d)} < +\infty .
\end{equation*}
Hence, the series in (\ref{equ:densBk}) is absolutely convergent.

It remains only to prove that $R(x) = o(x)$ as $x \to +\infty$, and then the desired result follows from (\ref{equ:count2}).
We have
\begin{align*}
|R(x)| &\leq \sum_{d \leq x} |\mu(d)|\left\{\frac{x}{\ell(dk)}\right\} = O\!\left(x^{1/2}\right) + \sum_{x^{1/2} \leq d \leq x} |\mu(d)|\left\{\frac{x}{\ell(dk)}\right\} \\
&\leq O\!\left(x^{1/2}\right) + x \sum_{d \,\geq\, x^{1/2}} \frac1{\ell(d)} = o(x) ,
\end{align*}
as $x \to +\infty$, since by Lemma~\ref{lem:convergence} the last series is the tail of a convergent series and hence converges to $0$ as $x \rightarrow +\infty$.
The proof is complete.
\end{proof}

At this point, by the definition of $\mathscr{B}_k$ and by the inclusion-exclusion principle, it follows easily that
\begin{equation*}
\#\mathscr{A}_k(x) = \sum_{d \,\mid\, k} \mu(d) \, \#\mathscr{B}_{dk}(x) ,
\end{equation*}
for all $x > 0$.
Hence, by Lemma~\ref{lem:Bkdens}, we get
\begin{align}\label{equ:last}
\mathbf{d}(\mathscr{A}_k) &= \sum_{d \,\mid\, k} \mu(d) \, \mathbf{d}(\mathscr{B}_{dk}) = \sum_{d \,\mid\, k} \mu(d) \sum_{(e, dk) = 1} \frac{\mu(e)}{\ell(dek)} \nonumber \\
&= \sum_{d \,\mid\, k} \sum_{(e, k) = 1} \frac{\mu(de)}{\ell(dek)} = \sum_{f = 1}^\infty \frac{\mu(f)}{\ell(fk)} , 
\end{align}
since every squarefree positive integer $f$ can be written in a unique way as $f = de$, where $d$ and $e$ are squarefree positive integers such that $d \mid k$ and $\gcd(e, k) = 1$.
Also note that the rearrangement of the series in (\ref{equ:last}) is justified by absolute convergence.
The proof of Theorem~\ref{thm:main} is complete.

\begin{rmk}
As a consequence of Theorem~\ref{thm:main}, note that if $\mathscr{A}_k=\varnothing$ (or equivalently if $k = \gcd(\ell(k), F_{\ell(k)})$, by Lemma~\ref{lem:Aknonempty}) then the series in (\ref{equ:main}) evaluates to $0$, which is not obvious a priori.
\end{rmk}

\section{Generalization to Lucas sequences}

In order to simplify the exposition, we chose to give our results for the sequence of Fibonacci numbers. 
However, they can be easily generalized to every nondegenerate Lucas sequence.
We recall that a Lucas sequence is an integral linear recurrence $(u_n)_{n \geq 0}$ satifying $u_0 = 0$, $u_1 = 1$, and $u_n = a_1 u_{n - 1} + a_2 u_{n - 2}$, for all integers $n \geq 2$, where $a_1$ and $a_2$ are relatively prime integers; while ``nondegenerate'' means that $a_1 a_2 \neq 0$ and that the ratio of the roots of the characteristic polinomial $f_u(X) := X^2 - a_1 X - a_2$ is not a root of unity.

To prove this generalization, there is just a minor complication that must be handled: The rank of appearance $z_u(m)$ of a positive integer $m$ in the Lucas sequence $(u_n)_{n \geq 0}$, that is, the smallest positive integer $n$ such that $m$ divides $u_n$, exists if and only if $m$ is relatively prime with $a_2$.
Therefore, the arguments involving $z(m)$ must be adapted to $z_u(m)$ considering only the positive integers $m$ which are relatively prime with $a_2$.
Except for that, everything works the same, since $z_u(m)$ and $\ell_u(m) := \lcm(m, z_u(m))$ satisfy the same properties of $z(m)$ and $\ell(m)$.
Note only that Lemma~\ref{lem:basic}(iii) must be replaced by: 
\begin{equation*}
z_u(p) \mid p - (-1)^{p-1}\left(\tfrac{\Delta_u}{p}\right),
\end{equation*}
for all prime numbers $p$ not dividing $a_2$, where $\Delta_u := a_1^2 + 4a_2$ is the discriminant of $f_u(X)$.
Also, the analog of Lemma~\ref{lem:basic}(iv), that is, $\nu_p(u_n) \geq \nu_p(n)$ whenever $z_u(p) \mid n$, can be proved, for example, by using the formula for the $p$-adic valuations of the terms of a Lucas sequence given in~\cite{MR3512829}.

With these changes, the following generalization can be proved.

\begin{thm}
Let $(u_n)_{n \geq 0}$ be a nondegenerate Lucas sequence satisfying the recurrence $u_n = a_1 u_{n - 1} + a_2 u_{n - 2}$ for all integers $n \geq 2$, where $a_1$ and $a_2$ are relatively prime integers.
Furthermore, for each positive integer $k$, define the set
\begin{equation*}
\mathscr{A}_{u,k} := \{n \geq 1 : \gcd(n, u_n) = k\} .
\end{equation*}
Then $\mathscr{A}_{u,k} \neq \varnothing$ if and only if $\gcd(k, a_2) = 1$ and $k = \gcd(\ell_u(k), u_{\ell_u(k)})$.
In such a case, $\mathscr{A}_{u,k}$ has an asymptotic density which is given by
\begin{equation*}
\mathbf{d}(\mathscr{A}_{u,k}) = \sum_{(d, a_2) = 1} \frac{\mu(d)}{\ell_u(d k)} ,
\end{equation*}
where the series is absolutely convergent.
\end{thm}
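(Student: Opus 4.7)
The plan is to reprove Theorems \ref{thm:exists} and \ref{thm:main} in the Lucas setting, systematically restricting every auxiliary divisor $d$ and every prime $p$ introduced in the argument to be coprime with $a_2$, so that $z_u$ is defined on them.

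First I would establish necessity of $\gcd(k,a_2) = 1$. From $u_n = a_1 u_{n-1} + a_2 u_{n-2}$ together with $\gcd(a_1,a_2) = 1$, any prime $p \mid a_2$ dividing some $u_n$ would force $p \mid u_{n-1}$, and a downward induction would yield $p \mid u_1 = 1$, a contradiction. Hence no prime dividing $a_2$ divides any term of the sequence, and $\mathscr{A}_{u,k} \neq \varnothing$ forces $\gcd(k, a_2) = 1$. Under that assumption, the proof of Lemma \ref{lem:Aknonempty} carries over using the modified Lemma \ref{lem:basic}, yielding the second half of the criterion.

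Next I would reprove the analytic inputs. The bound $|u_n| \leq C^n$ for a suitable constant $C$ is standard for nondegenerate Lucas sequences, so the proof of Lemma \ref{lem:Qgamma} adapts verbatim, giving $\#\mathscr{Q}_{u,\gamma}(x) \ll_u x^{2\gamma}$. The proof of Lemma \ref{lem:convergence}, restricted to integers coprime with $a_2$ (the only ones where $\ell_u$ is defined), then goes through with the same exponents $\beta = 3/4$, $\gamma = 1/3$: the altered form of Lemma \ref{lem:basic}(iii) only affects implicit constants, and the possible exceptional prime at which $\ell_u(p) \neq p z_u(p)$ contributes $O(1)$ to the relevant series.

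Finally I would run the proofs of Lemmas \ref{lem:AandL} and \ref{lem:Bkdens}, and the deduction of Theorem \ref{thm:main}. The analog of $\mathscr{L}_{u,k}$ consists of primes $p \mid k$ together with the ratios $\ell_u(kp)/\ell_u(k)$ for primes $p \nmid k a_2$, and the set-of-nonmultiples argument combined with Lemma \ref{lem:thinset} produces the existence and positivity of $\mathbf{d}(\mathscr{A}_{u,k})$. The M\"obius-inversion identity of Lemma \ref{lem:Bkdens} is reproved with the inner sum ranging over $(d, k a_2) = 1$ (combining the original condition $(d,k) = 1$ with the new condition $(d, a_2) = 1$ needed for $\ell_u(dk)$ to be defined); the $o(x)$ remainder bound is unchanged. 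The inclusion-exclusion over $d \mid k$ collapses as in \eqref{equ:last}, yielding the stated formula summed over $(f, a_2) = 1$. The main obstacle is purely notational: one must thread the coprimality condition $(\cdot, a_2) = 1$ through every sum and set definition, and verify that the exceptional prime from the Lucas analog of Lemma \ref{lem:basic}(vii) does not disrupt the convergence estimates. No new analytic ingredient is required.
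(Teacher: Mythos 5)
Your proposal is correct and takes essentially the same route as the paper, whose Section 5 argument is precisely this adaptation: restrict every integer appearing in the proofs to those coprime to $a_2$ (justified by your induction showing no prime divisor of $a_2$ divides any $u_n$), replace Lemma~\ref{lem:basic}(iii) by the divisibility $z_u(p) \mid p - (-1)^{p-1}\left(\tfrac{\Delta_u}{p}\right)$, and observe that the finitely many ramified primes $p \mid \Delta_u$ with $\ell_u(p) \neq p\, z_u(p)$ contribute only $O(1)$ to the convergence estimates. The one ingredient you pass over silently---the analog of Lemma~\ref{lem:basic}(iv), namely $\nu_p(u_n) \geq \nu_p(n)$ whenever $z_u(p) \mid n$, which underlies the key valuation computation in Lemma~\ref{lem:AandL}---is also not proved in the paper, which cites the $p$-adic valuation formulas of \cite{MR3512829} for it.
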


\subsection*{Acknowledgements}

The authors thank the anonymous referee for carefully reading the paper and for suggesting a much simpler proof of Lemma~\ref{lem:convergence}, instead of our original one which was based on arguments similar to those of~\cite[Theorem~5]{MR1836921}.

\bibliographystyle{amsplain}

\end{document}